\newtheorem{thm}{Theorem}[section]  
\newtheorem*{un-no-thm}{Theorem}
\newtheorem{cor}[thm]{Corollary}     
\theoremstyle{definition}
\newtheorem{defn}[thm]{Definition}   
\theoremstyle{definition}
\theoremstyle{definition}
\theoremstyle{remark}
\newtheorem{rem}[thm]{Remark}
\newtheorem*{intro-rem}{Remark}
\newtheorem*{intro-rems}{Remarks}
\newtheorem{ex}[thm]{Example}
\begin{document}
\title{Probability measures on graph trajectories}
\author[M.~J.~Catanzaro]{Michael J.\ Catanzaro}
\address{Department of Mathematics, Iowa State University, Ames, IA 50011}
\email{mjcatanz@iastate.edu}
\author[V.~Y.~Chernyak] {Vladimir Y.\ Chernyak}
\address{Department of Chemistry, Wayne State University, Detroit, MI 48202}
\email{chernyak@chem.wayne.edu}
\author[J.~R.~Klein]{John R.\ Klein}
\address{Department of Mathematics, Wayne State University, Detroit, MI 48202}
\email{klein@math.wayne.edu}
\begin{abstract} 
The aim of this note is to construct a
probability measure on
the space of trajectories in a continuous time Markov chain 
having a finite state diagram, or more generally which admits a  global bound on
its degree and rates.
Our approach is elementary. 
Our main intention is to fill a gap in the literature and to give some additional details
in the proof of \cite[prop.~4.8]{CCK-fluctuation}.
\end{abstract}
\thanks{}
\maketitle
\setlength{\parindent}{15pt}
\setlength{\parskip}{1pt plus 0pt minus 1pt}
\def\smsh{\wedge}
\def\flush{\flushpar}
\def\dbslash{/\!\! /}
\def\:{\colon\!}
\def\Bbb{\mathbb}
\def\bold{\mathbf}
\def\cal{\mathcal}
\def\End{\text{\rm End}}
\def\Aut{\text{\rm Aut}}
\def\map{\text{\rm map}}
\def\sh{\text{\rm sh}}
\def\orb{\cal O}
\def\hoP{\text{\rm ho}P}
\def\Ch{\text{\rm Ch}}
\def\sperf{\text{\rm sperf}}
\def\perf{\text{\rm perf}}
\def\proj{\text{\rm proj}}
\def\fd{\text{\rm fd}}
\def\cfd{\text{\rm cfd}}
\def\gl{\text{\rm GL}}
\def\sm{\text{\rm sm}}

\setcounter{tocdepth}{1}
\tableofcontents
\addcontentsline{file}{sec_unit}{entry}

\section{Introduction} \label{sec:intro}

As is well known, Markov chains model random walks on graphs. Let $\Gamma$ be a directed graph. Its set of vertices
$\Gamma_0$ represent the states of the system and its edges $\Gamma_1$ indicate transitions between states.
There are two flavors of random walk: those in discrete time and those in continuous time. This note will consider
the continuous time variant.

The dynamics of continuous time random walk are encoded by a master equation
\[
p'(t) = \Bbb H(t)p(t)\, ,
\]
 where $\Bbb H(t)$ is a time dependent matrix of {\it transition rates} and $p(t)$ is a 1-parameter family
of probability distributions on $\Gamma_0$. The solutions to the equation describe the time evolution of probability.
For vertices $i$ and $j$, the matrix entry $\Bbb H(t)_{ij}$ 
is the instantaneous rate of change  at time $t$ in jumping from state $i$ to state $j$ 
along the set of edges of $\Gamma$ having initial vertex $i$ and terminal vertex $j$. The operator
$\Bbb H$ is  called the {\it master operator}; its off diagonal entries are non-negative and the sum
of the entries in any column add to zero. 

Given a continuous time Markov chain with state diagram $\Gamma$, our goal here will be to construct a probability distribution on the space of trajectories in $\Gamma$. By a trajectory in $\Gamma$,
we mean a path of contiguous edges equipped with jump times at each vertex of the path.
 Note that such a probability distribution amounts to a description of the stochastic process associated with the Markov chain. 
 
 \begin{rem}
We apologize to the reader in advance for our somewhat unconventional treatment:  
two of us are algebraic topologists and one is a chemical physicist.
\end{rem}

\section{Preliminaries} 
For a set $T$, let $\binom{T}{2}$ denote the set of its non-empty subsets of cardinality  $2$.
An {\it undirected graph} consists of data
\[
X := (X_0,X_1,\delta)\, ,
\]
in which $X_0$ is the set of vertices, $X_1$ is the set of edges and
\[
\delta\: X_1 \to \tbinom{X_0}{2}
\]
is a function.  
We will always assume that $X$ is locally finite in the sense that the function $\delta$ is a finite-to-one.
With this definition multiple edges connecting a pair of distinct vertices are permitted, but we do not permit loop edges, i.e.,
edges which connect a vertex to itself.

A directed graph $\Gamma$ is defined in a similar way, but where now $\delta$ is replaced by a function
$d\: \Gamma_1 \to \Gamma_0(2)$, where $\Gamma_0(2) := \Gamma_0 \times \Gamma_0\setminus \Delta$, i.e.,
the cartesian product with its diagonal deleted. We write $d = (d_0,d_1)$, where 
$d_i\: \Gamma\to \Gamma_0$ is is the function which assigns to a directed edge its source, respectively target.
Note that the canonical map $\pi\:\Gamma_0(2) \to \binom{\Gamma_0}{2}$
is a double cover, and the composition $\delta:= \pi\circ d$ defines the underlying undirected graph.

\begin{ex} Given an undirected graph $X$, we may construct its {\it double.} This is the directed graph
\[
DX := \Gamma = (\Gamma_0,\Gamma_1,d)\, ,
\]
in which $\Gamma_0 = X_0$ and $\Gamma_1$ is the set of ordered pairs $(i,\alpha) \in  X_0 \times X_1$
in which $i \in \delta(\alpha)$. The function $d\: \Gamma_1 \to \Gamma_0(2)$ is given by
$d(i,\alpha) = (i,j)$, where $\delta(\alpha) = \{i,j\}$.  
\end{ex}

\begin{rem} 
Let $\cal G$ be the category of undirected graphs. An object is an undirected graph and a morphism
$f\: (G_0,G_1,\delta)\to (H_0,H_1,\delta')$ consists of functions $f_i\:G_i \to H_i$, $i=0,1$ such that 
$\delta'f_1(\alpha) = f_0(\delta\alpha)$. Similarly, one has the category $\cal G^+$ of directed graphs.
Then we have an adjoint functor pair
\[
U\: \cal G^+ \leftrightarrows \cal G : D
\]
where $U$ is the forgetful functor and $D$ is given by the double.
\end{rem}

\subsection{Markov chains} Let $\Gamma$ be a directed graph.
A \textit{continuous time Markov chain} with state diagram $\Gamma$ 
is an assignment of 
a continuous function 
  \[ k_{\alpha}\: \Bbb R \to [0,\infty)\, ,
 \]
 to each edge $\alpha\in \Gamma_1$. 
The function $k_{\alpha}$  is called the {\it transition rate} of $\alpha$. 
If $d(\alpha) = (i,j)$, then $k_{\alpha}$ is to be interpreted as
  the instantaneous rate of change of probability in jumping 
  from $i$ to $j$ along $\alpha$.

    \begin{rem} The foundational material on Markov chains can be found in the texts of Norris \cite{Norris}
    and  Stroock 
    \cite{STR14}.
    When the transition rates  are constant, the Markov chain is said to be {\it time homogeneous}.
    When the rates are not constant,
   the chain is said to be {\it time inhomogeneous}.\footnote{In contrast with the homogeneous case, the literature on the inhomogeneous case is scant, with the known results  making strong additional assumptions. The only foundational work
   we are of aware of that treats  the time inhomogeneous case is
    Stroock's  text (cf.~\cite[\S5.5.2]{STR14}).}   
  \end{rem}

 \begin{rem}  
The canonical map $\Gamma\to  DU\Gamma$ is an embedding. Given a Markov chain on $\Gamma$, one has a canonical extension
to $DU\Gamma$ by defining the rates to be zero on those edges which aren't in $\Gamma$. The Markovian dynamics 
of the two chains coincide. From this standpoint, there is nothing to lose by assuming that $\Gamma = DX$ for some undirected graph $X$.
\end{rem}
 
 If $\Gamma$ is infinite, we also require the following growth constraints.
 
 \begin{defn}[Rate Bound] For each $t >0$,  there exists a constant $R$, possibly depending on $t$, such that
 \[
 k_\alpha(s) \le R
 \]
 for $0 \le s \le t$ and every $\alpha \in \Gamma_1$.
 \end{defn}
 
 \begin{defn}[Degree Bound] Let $\deg \: \Gamma_0 \to \Bbb N$ be the function which assigns
 to a vertex its degree, i.e., the number of edges meeting it. 
 There is a positive integer $D$ such that 
 \[
 \deg(i) \le D\, , \quad \text{for all } \quad i \in\Gamma_0\, .
 \]
 \end{defn}

Observe that when $\Gamma$ is a finite, both conditions hold automatically.

 \subsection{The master equation} 
Then the rates define a time dependent square matrix $\Bbb H=\Bbb H(t)$, as follows.
For $i\ne j$, set
\[
h_{ij} = \sum_{d(\alpha) = (i,j)} k_{\alpha}\, ,
\]
where the sum is interpreted as zero when $d^{-1}(i,j)$ is the empty set.
Then the matrix entries of $\Bbb H$ are given by 
 \[
 \Bbb H_{ij} = \begin{cases} 
h_{ij}\, ,\qquad & i\ne j \, ;\\
  -\sum_{\ell\ne i} h_{\ell i } \, ,  \quad  \text{ if }  & i=j\, ,
 \end{cases}
 \] 
 where the indices range over $i,j\in \Gamma_0$.
 The  time dependent matrix  $\Bbb H$ is called the {\it master operator}.
  Associated with $\Bbb H$ is a linear, first order 
 ordinary differential equation
 \begin{equation} \label{eqn:kolmogorov}
 p'(t) = \Bbb Hp(t),\qquad 
\end{equation}
in which $p(t)$ is a one parameter family of (probability) distributions on the set of vertices $\Gamma_0$.  
Equation \eqref{eqn:kolmogorov} is called the {\it (forward) Kolmogorov equation} or the {\it master equation} 
\cite[eqn.~5.5.2]{STR14}. 
Its solutions describe the evolution of an initial distribution $p(0)$.

\begin{rem} If $\Gamma = DX$
and the transition rates are constant with value 1, then $\Bbb H$ is the graph Laplacian of $X$  
and  \eqref{eqn:kolmogorov} is a combinatorial version of the heat (diffusion) equation. 
\end{rem}

\begin{rem} The forward Kolmogorov equation is often written in the literature in adjoint form, i.e., as
\[
q'(t) = q(t) \Bbb W\, ,
\] 
where $q(t) = p(t)^*$ and $\Bbb W = \Bbb H^*$ are the transposed matrices. The backward equation (which we will not consider here) is 
\[
q'(t) = \Bbb W q(t)\, .
\]
\end{rem}

 \subsection{Trajectories}
 A {\it path} of length $n$ in $\Gamma$ consists of a sequence of edges
 \[
\alpha_\bullet := (\alpha_1,\dots ,\alpha_n)\, ,
 \]
 such that $d_1(\alpha_k) =d_0(\alpha_{k+1})$ for $1\le k < n$.
We let
\[
i_k(\alpha_\bullet) := i_k
\]
denote the $k$-th vertex of the path, i.e.,  $i_k = d_0(\alpha_k))$ if
$k \le n$ and $i_{n+1} = d_1(\alpha_n)$.

   A {\it trajectory} of length $n$ and duration $t >0$ is a
pair
 \[
 (\alpha_\bullet,t_\bullet)\, ,
 \]
 such that $\alpha_\bullet$ is a path of length $n$ and $t_\bullet = (t_1,\dots,t_n)$
 is a sequence of real numbers satisfying 
 \[
 0 \le t_1 \le \cdots \le t_n \le t\, .
 \]
In what follows, it will be convenient to set
 \[
 t_0 := 0 \quad \text{ and } \quad t_{n+1} =: t\, .
 \]
 \begin{rem}
 For a vertex $i_k = i_k(\alpha_\bullet)$ of the path $\alpha_\bullet)$, the number $t_k$ is  called the {\it jump time} 
 and the number $w_k := t_k -t_{k-1}$ is called {\it wait time}.
\end{rem}

\section{The probability of a trajectory}

Let $(\Gamma,k_\bullet)$ be as in the previous section.
Given a vertex $i \in \Gamma_0$ and an interval $[a,b]$, the {\it escape rate} at $i$ is
\[
u_i(a,b) := \exp\left( -\sum_{d_1(\alpha) = i} \int_{a}^{b} k_\alpha(s)\,  ds\right)  = 
\exp\left(\int_{a}^{b} h_{ii}(s)\,  ds\right) \, .
 \] 
 Fix an initial probability
 distribution $q\: \Gamma_0 \to \Bbb R_+$.
For $j\in \Gamma_0$, set $q_j := q(j)$.

Let 
 \[
 \cal T(\Gamma,n,t)
 \]
 denote the set of trajectories of $\Gamma$ having length $n$.
Define a function
\[
f\: \cal T(\Gamma,n,t)\to \Bbb R_+
\] 
by the formula
\begin{align*}
 f(\alpha_\bullet,t_\bullet) &=  q_{i_{1}}u_{i_{1}}(0,t_{1}) k_{\alpha_1}(t_1)   
 \cdots u_{i_n}(t_{n-1},t_n) k_{\alpha_n}(t_n)   u_{i_{n+1}}(t_n,t) 
   \, , \\
 &= q_{i_{1}} \prod_{m=1}^{n+1} u_{i_m}(t_{m-1},t_{m}) \prod_{m=1}^{n} k_{\alpha_m}(t_{m})
 \end{align*} 
(compare \cite[eqn.~1.112]{Bellac} in the constant rate case).\footnote{The function $f$ is a discrete analogue
of the Onsager-Machlup Lagrangian \cite{Onsager-Machlup}.}

Consider the master equation
\[
p'(t) = \Bbb H p(t), \quad p(0) = q\, .
 \]
Let 
\[
\cal P(\Gamma,n)
\] denote the set of paths of length $n$ and let $\cal P^i(\Gamma,n) \subset \cal P(\Gamma,n)$
denote the subset of those paths which have terminus $i \in \Gamma_0$.

\begin{thm}\label{thm:distribution} The formal solution to the master equation is the vector valued function 
$p(t)$ whose component at $i\in \Gamma_0$ is given by the expression 
\[
p_i(t) = \sum_{n=0}^\infty\sum_{\scriptscriptstyle \alpha_\bullet\in  \cal P^i(\Gamma,n)} \int_0^t \! \!\! \int_0^{t_n}\! \!\! \cdots \! \!\!\int_0^{t_2} 
f(\alpha_\bullet,t_\bullet) \, dt_1 \cdots dt_n\, .
\]
\end{thm}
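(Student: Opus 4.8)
The plan is to check directly that the displayed series formally solves the initial value problem $p'(t)=\Bbb H p(t)$, $p(0)=q$; the computation will also make transparent where the series comes from. Split $\Bbb H = \Bbb D + \Bbb N$ into its diagonal part $\Bbb D$ (entries $h_{ii}$) and its off-diagonal part $\Bbb N$ (entries $h_{ij}$, $i\ne j$). The one fact about escape rates that is needed is that, for fixed $a$, the function $b\mapsto u_i(a,b)$ is the fundamental solution of the scalar linear equation $y'=h_{ii}(b)y$ with $y(a)=1$, i.e.
\[
\tfrac{\partial}{\partial b}\,u_i(a,b) = h_{ii}(b)\,u_i(a,b),\qquad u_i(a,a)=1,
\]
which is immediate from $u_i(a,b)=\exp(\int_a^b h_{ii}(s)\,ds)$. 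Thus the propagator of $\Bbb D$ alone is the diagonal matrix $s\mapsto \mathrm{diag}(u_i(s,t))$, and the asserted formula is precisely the Dyson (iterated Duhamel) expansion of $p(t)$ for the splitting $\Bbb D+\Bbb N$, with each off-diagonal entry $h_{jk}=\sum_{d(\alpha)=(j,k)}k_\alpha$ expanded over edges — this expansion is exactly what turns the intermediate-vertex sums in iterated matrix products into sums over paths $\alpha_\bullet\in\cal P^i(\Gamma,n)$. So I will just verify the displayed $p_i(t)$ by hand.

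Write $p_i(t)=\sum_{n\ge 0}\sum_{\alpha_\bullet\in\cal P^i(\Gamma,n)} I_n(\alpha_\bullet,t)$, where $I_n(\alpha_\bullet,t)$ is the iterated integral of $f(\alpha_\bullet,t_\bullet)$ over the simplex $0\le t_1\le\cdots\le t_n\le t$ appearing in the statement. At $t=0$ every summand with $n\ge 1$ has a vanishing outermost integral, while the unique path of length zero with terminus $i$ contributes $q_i\,u_i(0,0)=q_i$; hence $p_i(0)=q_i$. Now differentiate term by term in $t$. In $I_n(\alpha_\bullet,t)$ the parameter $t$ appears in exactly two places: as the upper limit of the outer integral, and as the second argument of the trailing escape factor $u_{i_{n+1}}(t_n,t)=u_i(t_n,t)$ inside $f$. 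Differentiating through the latter pulls out $h_{ii}(t)$ times the unchanged integrand, by the displayed ODE, giving the contribution $h_{ii}(t)\,I_n(\alpha_\bullet,t)$; summing over all $n$ and all $\alpha_\bullet$ produces $h_{ii}(t)\,p_i(t)=\Bbb H_{ii}(t)\,p_i(t)$, which is the diagonal term of $\Bbb H p(t)$.

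Differentiating through the outer upper limit evaluates the integrand at $t_n=t$. There $u_{i_{n+1}}(t_n,t)=u_i(t,t)=1$ drops out, $k_{\alpha_n}(t_n)$ becomes $k_{\alpha_n}(t)$, and what remains is exactly $f(\alpha'_\bullet,t'_\bullet)$ for the truncated path $\alpha'_\bullet:=(\alpha_1,\dots,\alpha_{n-1})$ — a path of length $n-1$ and duration $t$ whose terminus is the vertex $i_n$ — times $k_{\alpha_n}(t)$; so this part of $\tfrac{d}{dt}I_n(\alpha_\bullet,t)$ equals $k_{\alpha_n}(t)\,I_{n-1}(\alpha'_\bullet,t)$ (and the domain $0\le t_1\le\cdots\le t_{n-1}\le t_n$ becomes the $(n-1)$-simplex of duration $t$). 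Reorganize the sum over $\alpha_\bullet\in\cal P^i(\Gamma,n)$, $n\ge 1$, according to the pair consisting of the shorter path $\alpha'_\bullet$, with terminus some $j\ne i$, and the final edge $\alpha_n$ running from $j$ to $i$: summing $k_{\alpha_n}(t)$ over the edges with those endpoints yields the off-diagonal entry $h_{ij}(t)$ of $\Bbb H$ (read with the paper's orientation convention), and summing $I_{n-1}(\alpha'_\bullet,t)$ over all $n\ge 1$ and all shorter paths ending at $j$ yields $p_j(t)$. Hence the outer-limit contribution assembles into $\sum_{j\ne i}h_{ij}(t)\,p_j(t)$, the off-diagonal term of $\Bbb H p(t)$. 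Adding the two contributions gives $p_i'(t)=\Bbb H_{ii}(t)p_i(t)+\sum_{j\ne i}\Bbb H_{ij}(t)p_j(t)=(\Bbb H p(t))_i$, as required.

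The substance of the argument is the bookkeeping of the last paragraph — matching "delete the last edge together with its jump time" to the row/column structure of $\Bbb H$, and checking that the time-ordered simplices are exactly the domains of the stated iterated integrals — since the calculus involved is a single line. The only remaining point is the legitimacy of the term-by-term differentiation: on any $[0,t]$ the Rate and Degree Bounds majorize the $n$-th layer of the series, and of its formal $t$-derivative, by a quantity of order $q_{\max}(DRt)^n/n!$, so both converge absolutely and uniformly there; for the "formal solution" asserted in the statement, however, this estimate is not needed and may be deferred.
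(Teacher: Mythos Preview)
Your proof is correct. Both you and the paper exploit the same diagonal/off-diagonal splitting $\Bbb H=\Bbb D+\Bbb N$ and the fact that $u_i(a,b)$ is the scalar propagator of the diagonal part, but the executions diverge from there. The paper \emph{derives} the formula constructively: it introduces a formal parameter $\epsilon$, sets $\Bbb H_\epsilon=\Bbb D+\epsilon\Bbb N$, expands $p=\sum_n \epsilon^n p^n$, and solves the resulting hierarchy $\dot p^n=\Bbb D\,p^n+\Bbb N\,p^{n-1}$ order by order with the integrating factor, building the iterated integrals from the inside out before setting $\epsilon=1$. You instead recognize the answer as the Dyson/Duhamel series for that splitting and \emph{verify} it directly, differentiating term by term and matching the two contributions (from the trailing escape factor and from the outer upper limit) to the diagonal and off-diagonal rows of $\Bbb H p$. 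Your route is shorter and makes transparent why the displayed series satisfies the ODE once it is written down; the paper's route has the complementary virtue of producing the formula rather than presupposing it, with the $\epsilon$-device making the grading by path length emerge naturally.
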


\begin{proof} Write $\Bbb H = A_0 + A_1$, where $A_0$ is the diagonal matrix with entries
$h_{ii}$.  For $\epsilon >0$, set $\Bbb H_\epsilon = A_0 + \epsilon A_1$. Consider the equation
\begin{equation} \label{eqn:master-epsilon}
\dot p = H_\epsilon p,\quad p(0) = q\, .
\end{equation}
We seek a formal solution $p = p^0 + \epsilon p^1 + \epsilon^2 p^2 + \cdots$ with $p^0(0) = q$ and $p^n(0) = 0$ for $n >0$. 
Once such a solution is found, we set $\epsilon = 1$ to obtain the formal solution to the master equation.

Expanding 
\eqref{eqn:master-epsilon} in $\epsilon$, we obtain the linear system
\begin{equation} \label{eqn:iteration}
\dot p^n = A_0p^n + A_1p^{n-1}\, , \quad n = 0,1,2,\dots
\end{equation}
where by convention $p^{-1} := 0$. 

For $i\in \Gamma_0$, the $i$-th equation of the system is the first order linear differential equation
\begin{equation} \label{eqn:vertex}
\dot p_i^n = h_{ii}p_i^n + \sum_{j\ne i} h_{ij} p_j^{n-1}\, .
\end{equation}
If $n = 0$, the system is uncoupled and separation of variables gives
\[
p_i^0 = q_i e^{\int_0^t h_{ii}(t_1) dt_1} = q_i u_i(0,t)\, .
\]
For $n >0$, the solution to \eqref{eqn:vertex}
can be iteratively solved using the integrating factor.  The first
iteration gives
\begin{align*}
p_{i}^n(t) &= \sum_{j\ne i} \int_0^t e^{\int_{t_n}^t h_{ii}\, dt_{n-1}} h_{ij}(t_n) p_j^{n-1}(t_n) \, dt_n\\
          &= \sum_{\alpha_{n}} \int_0^t u_{i_{n+1}}(t_n,t) k_{\alpha_n}(t_n) p_{i_n} ^{n-1}(t_n) \,  dt_n\, .
\end{align*}
where the second sum is over all edges $\alpha_{n}$ with terminus $i = i_{n+1}$ and whose source is denoted
in the integrand by $i_n$.
We then repeat the procedure using $p_i^{n-1}$ in place of $p_i^{n}$, to obtain
\[
p_i^n(t) = \sum_{\alpha_\bullet} \int_0^t \! \!\! \int_0^{t_n} u_{i_{n+1}}(t_n,t) k_{\alpha_n}(t_n) u_{i_n}(t_{n-1},t_n)
k_{\alpha_{n-1}}(t_{n-1})p_{i_{n-1}}^{n-2}(t_{n-1}) \,  dt_{n-1} dt_{n}\, ,
\]
where the  sum is indexed over all paths of length two $\alpha_\bullet = (\alpha_{n-1},\alpha_n)$ 
satisfying $d(\alpha_{n-1}) = (i_{n-1},i_n)$, $d(\alpha_n) = (i_n,i_{n+1})$, and $i_{n+1} = i$.
Applying this procedure a total of $n$ times results in the desired expression for $p^n_i(t)$.
\end{proof}

Let $\cal T(\Gamma,t)$ denote the space of trajectories of duration $t$ of arbitrary length.

\begin{cor}\label{cor:distribution}  Let $p(t)$ denote the formal solution to the master equation.
 Then $p(t)$ is  a probability distribution on $\Gamma_0$ for every $t\ge 0$. In particular, the function $f$ 
is a probability density on $\cal T(\Gamma,t)$.
\end{cor}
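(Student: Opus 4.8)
The plan is to read everything off the explicit series supplied by Theorem~\ref{thm:distribution}. Write $p_i(t)=\sum_{n\ge 0}p_i^n(t)$ where
\[
p_i^n(t)\ =\ \sum_{\alpha_\bullet\in\cal P^i(\Gamma,n)}\int_{\Delta_n(t)}f(\alpha_\bullet,t_\bullet)\,dt_\bullet\,,\qquad \Delta_n(t):=\{\,0\le t_1\le\cdots\le t_n\le t\,\}
\]
is the $n$-th Picard iterate $p_i^n$ of the proof above (so $p_i^n(0)=\delta_{n,0}q_i$, and $dt_\bullet=dt_1\cdots dt_n$). Nonnegativity of $p(t)$ is then immediate: every factor $q_{i_1}$, $u_{i_m}(t_{m-1},t_m)$, $k_{\alpha_m}(t_m)$ occurring in $f$ is nonnegative, so $p_i^n(t)\ge 0$, hence $p_i(t)\ge 0$. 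The substantive points are (a) convergence of the series when $\Gamma$ is infinite, and (b) the normalization $\sum_{i\in\Gamma_0}p_i(t)=1$; both follow from a single a priori bound.

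First I would prove that bound. On $[0,t]$ the Rate Bound gives $k_\alpha\le R$; the escape rates obey $0<u_i(a,b)\le 1$ since $h_{ii}\le 0$; and $\Delta_n(t)$ has volume $t^n/n!$. The Degree Bound limits the number of paths of length $n$ with a prescribed initial (or terminal) vertex to at most $D^n$, and since $q$ is a probability distribution, summing $q_{i_1(\alpha_\bullet)}$ over all length-$n$ paths with a fixed endpoint is at most $D^n$. Combining these estimates (and using that $\sigma_n(t)=\sum_{\alpha_\bullet\in\cal P(\Gamma,n)}\int_{\Delta_n(t)}f\,dt_\bullet$ is the sum over \emph{all} length-$n$ paths),
\[
p_i^n(t)\ \le\ \frac{(DRt)^n}{n!}\qquad\text{and}\qquad \sigma_n(t):=\sum_{i\in\Gamma_0}p_i^n(t)\ \le\ \frac{(DRt)^n}{n!}\,.
\]
Hence every series in sight converges, uniformly for $t$ in a compact interval, with $\sum_i p_i(t)\le e^{DRt}<\infty$; this also justifies, via Tonelli, the interchanges of $\sum_i$, $\sum_n$ and $\int_0^t$ used below (after grouping, the relevant integrands are of one sign), and incidentally shows $p(t)$ is a genuine, not merely formal, solution of the master equation.

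Next the normalization. Integrating the vertex equation~\eqref{eqn:vertex} against the initial data $p_i^n(0)=\delta_{n,0}q_i$, summing over $i\in\Gamma_0$, and using $\sum_{i\ne j}h_{ij}=-h_{jj}$ (equivalently, that the columns of $\Bbb H$ sum to zero) to collapse the coupling term, one obtains
\[
\sigma_n(t)\ =\ \delta_{n,0}+\int_0^t\sum_{i\in\Gamma_0}h_{ii}(s)\bigl(p_i^n(s)-p_i^{n-1}(s)\bigr)\,ds\,.
\]
Summing this over $n=0,\dots,N$ telescopes the right-hand side, giving
\[
M_N(t):=\sum_{n=0}^N\sigma_n(t)\ =\ 1+\int_0^t\sum_{i\in\Gamma_0}h_{ii}(s)\,p_i^N(s)\,ds\ \le\ 1\,,
\]
the inequality because $h_{ii}\le 0$ and $p_i^N\ge 0$; moreover $0\le 1-M_N(t)=\int_0^t\sum_i|h_{ii}(s)|\,p_i^N(s)\,ds\le C\int_0^t\sigma_N(s)\,ds\le Ct\,(DRt)^N/N!$, where $C$ bounds $|h_{ii}|$ on $[0,t]$. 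Letting $N\to\infty$ yields $\sum_{i\in\Gamma_0}p_i(t)=\lim_N M_N(t)=1$. With nonnegativity this shows $p(t)$ is a probability distribution; and since $\cal T(\Gamma,t)$ is the evident disjoint union of simplices $\Delta_n(t)$ indexed by paths, carrying the product of counting and Lebesgue measure, $\int_{\cal T(\Gamma,t)}f=\sum_n\sigma_n(t)=\sum_i p_i(t)=1$ while $f\ge 0$, so $f$ is a probability density.

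The only genuinely delicate issue, the rest being bookkeeping, is the passage to infinite $\Gamma$: the factorial estimate of the second paragraph is precisely what makes the interchanges of infinite sums and integrals legitimate, and it is the only place where the Rate and Degree Bounds are used. When $\Gamma$ is finite all sums are finite and the argument is entirely elementary.
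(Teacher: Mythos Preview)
Your argument is correct. The convergence estimate is essentially the paper's own, but for the normalization $\sum_i p_i(t)=1$ you take a genuinely different route: you work level by level with the Picard iterates, derive the identity $\sigma_n(t)=\delta_{n,0}+\int_0^t\sum_i h_{ii}(p_i^n-p_i^{n-1})\,ds$ from \eqref{eqn:vertex}, telescope over $n\le N$, and control the tail $1-M_N(t)$ explicitly by $Ct\,(DRt)^N/N!$. The paper instead argues globally on the assembled solution: since $\bold 1\cdot\Bbb H=0$, one has $\tfrac{d}{dt}(\bold 1\cdot p)=\bold 1\cdot\Bbb H p=0$, so $\bold 1\cdot p(t)=\bold 1\cdot p(0)=1$. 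The paper's argument is shorter and more conceptual, but when $\Gamma$ is infinite it tacitly assumes one may differentiate $\sum_i p_i(t)$ term by term and that $\bold 1\cdot(\Bbb H p)=(\bold 1\cdot\Bbb H)p$; your approach makes these interchanges explicit via Tonelli on signed pieces and in addition yields a quantitative rate at which the partial sums approach $1$. Both proofs use the column-sum-zero property of $\Bbb H$ at the same pivotal moment, just packaged differently.
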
 

\begin{proof} As the rate bound holds,
there is a constant $C>0$, independent of $n$, such $f(\alpha_\bullet,t_\bullet) \le C^n$ for all
trajectories of length $n$. As the degree bound holds, there is global bound
$D$ on the degree function, so the number of paths of length $n$ terminating at a vertex $i$ is at most $D^n$.
Consequently,
\[
0\le \sum_{\alpha_\bullet} \int_0^t\! \!\! \int_0^{t_n} \! \!\! \cdots\! \!\! \int_0^{t_2} 
f(\alpha_\bullet,t_\bullet) \, dt_1 \cdots dt_n \le \frac{(CDt)^n}{n!}\, ,
\]
where the sum ranges over paths of length $n$ with terminus $i$. Here, we have used 
the fact that $t^n/n!$ is the volume of the $n$-simplex $0\le t_1\le \cdots t_n \le t$.
By the comparison test, $\sum_n p^n_i(t)$ converges. Therefore $p(t) = \sum_n p^n(t)$ also converges.

Let $\bold 1\: \Gamma_0 \to \Bbb R$ be the row vector which is identically one at every vertex. It will
suffice to show that $\bold 1 \cdot  p(t) = 1$.
Observe that $\bold 1\cdot \Bbb H = 0$, since the entries of $\Bbb H$ in any column add to zero.
Then for all $t$ we have
\[
\frac{d}{dt} (\bold 1 \cdot p(t)) = \bold 1 \cdot p'(t) = \bold 1 \cdot \Bbb Hp(t) = 0\, .
\]
Consequently, $\bold 1 \cdot p(t)$ is a constant. But $\bold 1 \cdot p(0) = 1$, hence $\bold 1 \cdot p(t) = 1$ for all $t$. 
\end{proof}

\section{Fundamental solutions} Consider the master equation 
with initial distribution $p(0) = \delta_{i}$ for a fixed vertex $i$, where $\delta_i(j) = \delta_{ij}$ is the Kronecker delta function.
The solution to this equation is called a {\it fundamental solution} and will be denoted by $u(i,t)$.

In this case the density $f$ is supported on the set of trajectories
$(\alpha_\bullet,t_\bullet)$ with initial vertex $i$. 
Let $\cal T_i(\Gamma,t) \subset \cal T(\Gamma,t)$ denote the subspace of trajectories which start at the vertex $i$.

\begin{cor} With respect to this assumption, the function 
$f$ is a probability density on $\cal T_i(\Gamma,t)$.
\end{cor}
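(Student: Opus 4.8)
The plan is to reduce this corollary to the one already proved, namely Corollary~\ref{cor:distribution}. Observe that the fundamental solution $u(i,t)$ is nothing other than the formal solution $p(t)$ to the master equation with the special choice of initial distribution $q = \delta_i$. Since $\delta_i$ is a genuine probability distribution on $\Gamma_0$ (it is non-negative and sums to one), Corollary~\ref{cor:distribution} applies verbatim: the series defining each $p_j(t)$ converges, $p(t)$ is a probability distribution on $\Gamma_0$ for every $t \ge 0$, and the function $f$ is a probability density on $\cal T(\Gamma,t)$.

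The only additional point to make is the one the statement emphasizes: with $q = \delta_i$, the density $f$ is supported on $\cal T_i(\Gamma,t)$, so it restricts to a probability density there. Concretely, in the formula
\[
f(\alpha_\bullet,t_\bullet) = q_{i_1} \prod_{m=1}^{n+1} u_{i_m}(t_{m-1},t_m) \prod_{m=1}^{n} k_{\alpha_m}(t_m),
\]
the leading factor $q_{i_1} = \delta_i(i_1)$ vanishes unless the initial vertex $i_1(\alpha_\bullet)$ of the path equals $i$. Hence $f(\alpha_\bullet,t_\bullet) = 0$ for every trajectory not lying in $\cal T_i(\Gamma,t)$, and the total mass of $f$ over $\cal T(\Gamma,t)$, which equals $\bold 1\cdot p(t) = 1$ by Corollary~\ref{cor:distribution}, is entirely concentrated on $\cal T_i(\Gamma,t)$. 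Therefore $f|_{\cal T_i(\Gamma,t)}$ is non-negative and integrates to $1$, i.e., it is a probability density on $\cal T_i(\Gamma,t)$.

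There is essentially no obstacle here: the corollary is a direct specialization of the preceding results, and the proof is a two-line observation. The only thing to be slightly careful about is matching conventions — that "the space of trajectories of duration $t$" is stratified by length $n$, that integration against $f$ means summing over $n$ and over paths in $\cal P(\Gamma,n)$ and then integrating over the simplex $0 \le t_1 \le \cdots \le t_n \le t$, and that the support statement is compatible with this stratified structure (it is, since the vanishing of $q_{i_1}$ is checked path-by-path). I would simply write: "This is immediate from Corollary~\ref{cor:distribution} applied to the initial distribution $q = \delta_i$, together with the observation that $f(\alpha_\bullet,t_\bullet)$ vanishes whenever $i_1(\alpha_\bullet) \ne i$, since $q_{i_1} = \delta_i(i_1)$."
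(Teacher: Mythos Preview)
Your proposal is correct and matches the paper's approach: the paper states the corollary without proof, treating it as immediate from Corollary~\ref{cor:distribution} together with the support observation already made in the text preceding the statement. Your write-up makes that implicit reasoning explicit and is entirely sound.
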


\begin{rem} The general solution $p(t)$ to the master equation with initial condition $p(0) = q$
is obtained from the fundamental solutions using the identity
\begin{equation} \label{eqn:fund}
p(t) = \sum_{j\in \Gamma_0} q_j u(j,t) \, .
\end{equation}
\end{rem}

\begin{defn} Define the {\it propagator} $K\: \Gamma_0 \times \Gamma_0 \times \Bbb R_+\to \Bbb R_+$  by
\[
K(i,j,t) = u_j(i,t)\, ,
\]
i.e., the probability of the set of trajectories of duration $t$ which start at vertex $i$ and terminate at vertex $j$.
Note the initial condition $K(i,j,0) = \delta_{ij}$. 
\end{defn}

Setting $\psi(x,t) := p_x(t)$, equation \eqref{eqn:fund} becomes
\[
\psi(y,t) = \sum_{x\in \Gamma_0} K(x,y,t) \psi(x,0) = \int_{x\in \Gamma_0} K(x,y,t) \psi(x,0)  \, ,
\]
which is familiar to the physics literature. By Theorem \ref{thm:distribution}, we obtain the path integral representation
\[
K(x,y,t) = \sum_{n=0}^\infty\sum_{\scriptscriptstyle \alpha_\bullet\in  \cal P^y_x(\Gamma,n)} \int_0^t\! \!\!\int_0^{t_n} \!\!\!\cdots\!\!\! \int_0^{t_2} 
f(\alpha_\bullet,t_\bullet) \, dt_1 \cdots dt_n\, ,
\]
where $\cal P^y_x(\Gamma,n)$ is the set of paths of length $n$ which start at $x$ and terminate at $y$. Furthermore, the series converges if
$\Gamma$ satisfies the rate and degree bounds.

\begin{ex}
Let $X$ be an $r$-regular graph, i.e., the number of edges meeting each vertex is $r$.
Assume that the rates $k_\bullet$ are constant with value one. Then the master operator $\Bbb H$ is the
negative of the graph Laplacian.
In this instance elementary to check that $f(\alpha_\bullet,t_\bullet) = \delta_{i} e^{-rt}$. Let 
$
\pi_n(i,j)
$
 be the number
of paths of length $n$ in $\Gamma = DX$ from $i$ to $j$. 
 Then a straightforward calculation shows
\[
K(i,j,t) = e^{-rt}\sum_{n=0}^\infty \pi_n(i,j) \frac{t^n}{n!}\, .
\]
In this case, $K$ is the combinatorial {\it heat kernel.}

As $\Gamma$ is $r$-regular, there are precisely $r^n$ paths of length $n$ which start at $i$. Set
\[
\phi_n(i,j) := \frac{\pi_n(i,j)}{r^n}\, .
\] 
Then  $\phi_n(i,j)$ is the probability of the set of paths ({\it not} trajectories),  which end
at  $j$ after $n$-jumps, given that such paths start at $i$ (where the probability of jumping across an edge meeting any vertex is $1/r$).

Let
\[
P(n) = \frac{(rt)^ne^{-rt}}{n!}\, .
\]
Then $P$ is the Poisson probability mass function with parameter $\lambda = rt$.
Consequently,
\[
K(i,j,t) =  \sum_{n=0}^\infty \phi_n(i,j) P(n) =   {\Bbb E}[\Phi_{ij}]\, ,
\]
is the (Poisson) expected value of the random variable $\Phi_{ij}(n):= \phi_n(i,j)$.
Summarizing, the continuous time random walk on an $r$-regular graph with uniform rate $1/r$ 
may be thought of as a discrete time random walk subordinated to a Poisson process (cf.~\cite[chap.~X\S7]{Feller}).
\end{ex}

\bibliography{dummies}



\end{document}